\providecommand{\U}[1]{\protect \rule{.1in}{.1in}}
\newtheorem{theorem}{Theorem}[section]
\newtheorem{definition}[theorem]{Definition}
\newtheorem{proposition}[theorem]{Proposition}
\newtheorem{remark}[theorem]{Remark}
\newenvironment{proof}[1][Proof]{\noindent \textbf{#1.} }{\  $\Box$}
\numberwithin{equation}{section}
\begin{document}

\title{Inequalities for independent random vectors under sublinear expectations}
\author{Xiaojuan Li\thanks{Department of Mathematics, Qilu Normal University, Jinan
250200, China. lxj110055@126.com. }
\and Mingshang Hu \thanks{Zhongtai Securities Institute for Financial Studies,
Shandong University, Jinan, Shandong 250100, China. humingshang@sdu.edu.cn.
Research supported by NSF (No. 12326603, 11671231).} }
\maketitle

\textbf{Abstract}. In this paper, by using the representation theorem for
sublinear expectations, we give a simple proof to obtain two inequalities
about the sample mean for independent random vectors under sublinear expectations.

{\textbf{Key words}. } Sublinear expectation, Daniell-Stone's theorem, Minimax
theorem, Lusin's theorem

\textbf{AMS subject classifications.} 60E05, 60E15

\addcontentsline{toc}{section}{\hspace*{1.8em}Abstract}

\section{Introduction}

Let $(\Omega,\mathcal{H},\mathbb{\hat{E})}$ be a regular sublinear expectation
space. Then, by Theorem 1.2.2 in \cite{P2019}, there exists a convex set of
probability measures $\mathcal{P}$ on $(\Omega,\sigma(\mathcal{H}))$ such that%
\[
\mathbb{\hat{E}}[X]=\max_{P\in \mathcal{P}}E_{P}[X]\text{ for }X\in
\mathcal{H}.
\]
From this, we can obtain%
\[
-\mathbb{\hat{E}}[-X]=\min_{P\in \mathcal{P}}E_{P}[X]\text{ for }%
X\in \mathcal{H}.
\]
Since $\mathcal{P}$ is convex, we know that $\{E_{P}[X]:P\in \mathcal{P}%
\}=[-\mathbb{\hat{E}}[-X],\mathbb{\hat{E}}[X]]$ by convex combination of
probabilities in $\mathcal{P}$ for each $X\in \mathcal{H}$. Let $\{X_{i}%
\}_{i=1}^{n}$ be an independent random variables in $L^{2}(\Omega)$ under
$\mathbb{\hat{E}}$ (see Definition \ref{new-de-1}). Fang et al. \cite{FP}
first obtained the following inequality about the sample mean (see \cite{HLL}
for different proof method):%
\begin{equation}
\mathbb{\hat{E}}\left[  \left \vert \left(  \frac{1}{n}\sum_{i=1}^{n}%
(X_{i}-\mathbb{\hat{E}}[X_{i}])\right)  ^{+}+\left(  \frac{1}{n}\sum_{i=1}%
^{n}(X_{i}+\mathbb{\hat{E}}[-X_{i}])\right)  ^{-}\right \vert ^{2}\right]
\leq \frac{C}{n}, \label{new-e0-0}%
\end{equation}
where $C$ is a positive constant. If $\mathbb{\hat{E}}$ is linear, then
(\ref{new-e0-0}) is the following classic inequality about the sample mean:%
\begin{equation}
\mathbb{\hat{E}}\left[  \left \vert \frac{1}{n}\sum_{i=1}^{n}(X_{i}%
-\mathbb{\hat{E}}[X_{i}])\right \vert ^{2}\right]  \leq \frac{C}{n}.
\label{new-e0-1}%
\end{equation}
Therefore, (\ref{new-e0-0}) can be viewed as a natural generalization of
(\ref{new-e0-1}) under the sublinear expectation. Let $\{X_{i}\}_{i=1}^{n}$ be
an independent and identically distributed $d$-dimensional random vectors in
$L^{2}(\Omega;\mathbb{R}^{d})$ under $\mathbb{\hat{E}}$. Under the assumption
that the closed convex hull of $\{E_{P}[X_{1}]:P\in \mathcal{P}\}$ is convex
polytope, Fang et al. \cite{FP} obtained the related inequality about the
sample mean in Theorem 2.2.

In this paper, we study the inequalities about the sample mean for independent
$d$-dimensional random vectors under $\mathbb{\hat{E}}$. Let $X_{i}=(X_{i}%
^{1}$,$\ldots$,$X_{i}^{d})^{T}$, $i\leq n$, be an independent $d$-dimensional
random vectors with $d>1$. Since $E_{P}[X_{i}]\in \mathbb{R}^{d}$ for
$P\in \mathcal{P}$ and $\mathbb{R}^{d}$ is partially ordered, it is difficult
to give the form of $\{E_{P}[X_{i}]:P\in \mathcal{P}\}$ by convex combination
of probabilities in $\mathcal{P}$. By the representation theorem for
$\mathbb{\hat{E}}$ and the separation theorem for convex sets, we show that
$\{E_{P}[X_{i}]:P\in \mathcal{P}\}=\Theta_{i}$, where $\Theta_{i}$ is defined
in (\ref{new-e2-3}). Based on this, we give a simple method to get the
inequality about the sample mean without the convex polytope assumption for
$\Theta_{i}$. Furthermore, by Sion's minimax theorem and Lusin's theorem, we
obtain
\[
\inf_{\xi \in L^{2}(\Omega;\Theta)}\mathbb{\hat{E}}\left[  \left \vert \frac
{1}{n}\sum_{i=1}^{n}X_{i}-\xi \right \vert ^{2}\right]  \leq \frac{\bar{\sigma
}_{n}^{2}}{n},
\]
where $\Theta$ and $\bar{\sigma}_{n}^{2}$ are defined in Theorem \ref{new-m2},
$L^{2}(\Omega;\Theta)=\{ \xi \in L^{2}(\Omega;\mathbb{R}^{d}):\xi(\omega
)\in \Theta$ for $\omega \in \Omega \}$.

This paper is organized as follows. We recall some basic results of sublinear
expectation in Section 2. In Section 3, we give three main results for
independent random vectors under regular sublinear expectation.

\section{Preliminaries}

In this section, we recall some basic results of sublinear expectation. The
readers may refer to \cite{P2019} for more details.

Let $\Omega$ be a complete separable metric space. Set $\mathcal{H}%
=C_{b.Lip}(\Omega)$, where $C_{b.Lip}(\Omega)$ denotes the space of bounded
Lipschitz functions on $\Omega$. A functional $\mathbb{\hat{E}}:\mathcal{H}%
\rightarrow \mathbb{R}$ is called a regular sublinear expectation if it satisfies:

\begin{description}
\item[(i)] Monotonicity: if $X\geq Y$, then $\mathbb{\hat{E}}[X]\geq
\mathbb{\hat{E}}[Y]$.

\item[(ii)] Constant preserving: $\mathbb{\hat{E}}[c]=c$ for each
$c\in \mathbb{R}$.

\item[(iii)] Subadditivity: $\mathbb{\hat{E}}[X+Y]$ $\leq \mathbb{\hat{E}%
}[X]+\mathbb{\hat{E}}[Y]$.

\item[(iv)] Positive homogeneity: $\mathbb{\hat{E}}[\lambda X]=\lambda
\mathbb{\hat{E}}[X]$ for each $\lambda \geq0$.

\item[(v)] Regular property: if $X_{n}\downarrow0$, then $\mathbb{\hat{E}%
}[X_{n}]\downarrow0$.
\end{description}

By the Daniell-Stone theorem, we have the following representation theorem for
$\mathbb{\hat{E}}$ (see \cite{DHP11, HP09}).

\begin{theorem}
\label{new-re-1}There exists a convex and weakly compact set of probability
measures $\mathcal{P}$ on $(\Omega,\mathcal{B}(\Omega))$ such that%
\begin{equation}
\mathbb{\hat{E}}[X]=\sup_{P\in \mathcal{P}}E_{P}[X]\text{ for }X\in \mathcal{H},
\label{new-eq-1}%
\end{equation}
where $\mathcal{B}(\Omega)$ is the Borel $\sigma$-field.
\end{theorem}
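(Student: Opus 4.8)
The plan is to realize $\mathbb{\hat{E}}$ first as an upper envelope of linear functionals and then to convert each such functional into a Borel probability measure by the Daniell-Stone theorem. First I would use that $\mathbb{\hat{E}}$ is, by (iii) and (iv), a sublinear functional on the real vector space $\mathcal{H}=C_{b.Lip}(\Omega)$. Fixing $X_0\in\mathcal{H}$ and defining $\ell_0(tX_0)=t\,\mathbb{\hat{E}}[X_0]$ on $\mathrm{span}(X_0)$, one checks via subadditivity (so that $\mathbb{\hat{E}}[X_0]+\mathbb{\hat{E}}[-X_0]\ge\mathbb{\hat{E}}[0]=0$) that $\ell_0\le\mathbb{\hat{E}}$ there; the Hahn-Banach theorem then extends $\ell_0$ to a linear functional $\ell\le\mathbb{\hat{E}}$ on all of $\mathcal{H}$ with $\ell(X_0)=\mathbb{\hat{E}}[X_0]$. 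Writing $\mathcal{L}$ for the convex set of all linear functionals dominated by $\mathbb{\hat{E}}$, this yields $\mathbb{\hat{E}}[X]=\max_{\ell\in\mathcal{L}}\ell(X)$ for every $X$.

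Next I would record that each $\ell\in\mathcal{L}$ is a normalized positive functional. Monotonicity (i) gives, for $X\ge 0$, that $-\ell(X)=\ell(-X)\le\mathbb{\hat{E}}[-X]\le\mathbb{\hat{E}}[0]=0$, so $\ell(X)\ge 0$; and (ii) forces $\ell(1)=1$ since $\ell(1)\le 1$ while $-\ell(1)=\ell(-1)\le -1$. Crucially, the regularity (v) upgrades $\ell$ to a Daniell functional: if $X_n\downarrow 0$ then $0\le\ell(X_n)\le\mathbb{\hat{E}}[X_n]\downarrow 0$, so $\ell(X_n)\to 0$. Since $\mathcal{H}$ is a Stone vector lattice containing the constants and $\sigma(\mathcal{H})=\mathcal{B}(\Omega)$ on the metric space $\Omega$, the Daniell-Stone theorem produces, for each $\ell\in\mathcal{L}$, a unique Borel probability measure $P_\ell$ with $\ell(X)=E_{P_\ell}[X]$. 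Setting $\mathcal{P}_0=\{P_\ell:\ell\in\mathcal{L}\}$ gives the representation $\mathbb{\hat{E}}[X]=\sup_{P\in\mathcal{P}_0}E_P[X]$, and the convexity of $\mathcal{L}$ together with uniqueness of representing measures makes $\mathcal{P}_0$ convex.

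The hardest part is the weak compactness, which I would obtain from tightness via Prokhorov's theorem, and here the Polish structure of $\Omega$ enters. For a fixed dense sequence $\{x_i\}$ and each $k$, I would set $A_{n,k}=\bigcup_{i\le n}\bar{B}(x_i,1/k)$ and consider the Lipschitz functions $f_{n,k}=(k\,d(\cdot,A_{n,k}))\wedge 1\in\mathcal{H}$, which satisfy $f_{n,k}\downarrow 0$ as $n\to\infty$ because $A_{n,k}\uparrow\Omega$. Regularity (v) then lets me pick $n_k$ with $\mathbb{\hat{E}}[f_{n_k,k}]\le\varepsilon 2^{-k}$, whence $\sup_{P\in\mathcal{P}_0}P(d(\cdot,A_{n_k,k})\ge 1/k)\le\varepsilon 2^{-k}$ uniformly. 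The set $K=\bigcap_k\{d(\cdot,A_{n_k,k})\le 1/k\}$ is closed and totally bounded, hence compact by completeness, and $\sup_{P\in\mathcal{P}_0}P(K^c)\le\varepsilon$; this is the uniform tightness of $\mathcal{P}_0$.

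Finally I would close up the argument: Prokhorov gives that $\mathcal{P}_0$ is relatively weakly compact, so its weak closure $\mathcal{P}=\overline{\mathcal{P}_0}$ is weakly compact; it remains convex, and since $P\mapsto E_P[X]$ is weakly continuous for $X\in C_{b.Lip}(\Omega)$ the supremum is unchanged, giving $\mathbb{\hat{E}}[X]=\sup_{P\in\mathcal{P}}E_P[X]$ with $\mathcal{P}$ convex and weakly compact. The one delicate point to watch is the interplay of the two continuity notions, namely the Daniell $\sigma$-continuity needed for the Daniell-Stone step and the functional-analytic tightness needed for weak compactness, both of which here flow from the single hypothesis (v); I would therefore make sure that (v) is applied in the correct monotone-limit form in each place.
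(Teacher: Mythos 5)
Your proof is correct and follows essentially the same route the paper relies on: the paper gives no inline proof but invokes the Daniell--Stone theorem and cites \cite{DHP11, HP09, P2019}, where the argument is exactly your Hahn--Banach extension to get dominated linear functionals, Daniell--Stone (using regularity (v) for $\sigma$-continuity) to represent each by a Borel probability measure, and tightness plus Prokhorov's theorem on the Polish space $\Omega$ for convexity and weak compactness of $\mathcal{P}$. Your write-up fills in all the key verifications (positivity and normalization of the dominated functionals, $\sigma(\mathcal{H})=\mathcal{B}(\Omega)$, and the compactness of the set $K$ via total boundedness) correctly.
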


For each fixed $p\geq1$, denote by $L^{p}(\Omega)$ the completion of
$\mathcal{H}$ under the norm $||X||_{p}:=(\mathbb{\hat{E}}[|X|^{p}])^{1/p}$.
By H\"{o}lder's inequality, we have $L^{p}(\Omega)\subset L^{1}(\Omega)$ for
$p\geq1$. $\mathbb{\hat{E}}$ can be continuously extended to $L^{1}(\Omega)$
under the norm $||\cdot||_{1}$, the representation (\ref{new-eq-1}) still
holds for $X\in L^{1}(\Omega)$ and $\mathbb{\hat{E}}$ is still a regular
sublinear expectation on $L^{1}(\Omega)$. By the Stone-Weierstrass theorem, we
can prove that $C_{b}(\Omega)\subset L^{1}(\Omega)$ (see Lemma 51 and Theorem
52 in \cite{DHP11}), where $C_{b}(\Omega)$ denotes the space of bounded and
continuous functions on $\Omega$.

A $d$-dimensional random vector $X=(X^{1}$,$\ldots$,$X^{d})^{T}$ in
$L^{p}(\Omega;\mathbb{R}^{d})$ means that $X^{i}\in L^{p}(\Omega)$ for $i\leq
d$. For each fixed $X\in L^{1}(\Omega;\mathbb{R}^{d})$, define $\mathbb{\hat
{F}}_{X}:C_{b.Lip}(\mathbb{R}^{d})\rightarrow \mathbb{R}$ as follows:%
\[
\mathbb{\hat{F}}_{X}[\varphi]:=\mathbb{\hat{E}}[\varphi(X)]\text{ for }%
\varphi \in C_{b.Lip}(\mathbb{R}^{d}).
\]
$\mathbb{\hat{F}}_{X}$ is called the distribution of $X$. Two $d$-dimensional
random vectors $X$ and $Y$ in $L^{1}(\Omega;\mathbb{R}^{d})$ are identically
distributed if $\mathbb{\hat{F}}_{X}=\mathbb{\hat{F}}_{Y}$, denoted by
$X\overset{d}{=}Y$. A random vector $Y\in L^{1}(\Omega;\mathbb{R}^{d_{2}})$ is
independent of another random vector $X\in L^{1}(\Omega;\mathbb{R}^{d_{1}})$
means that for each $\psi \in C_{b.Lip}(\mathbb{R}^{d_{1}+d_{2}})$, we have%
\[
\mathbb{\hat{E}}[\psi(X,Y)]=\mathbb{\hat{E}}\left[  \mathbb{\hat{E}}%
[\psi(x,Y)]_{x=X}\right]  .
\]

\begin{definition}
\label{new-de-1}A sequence of random vectors $\{X_{i}\}_{i=1}^{\infty}\subset
L^{1}(\Omega;\mathbb{R}^{d})$ is called independent if $X_{i+1}$ is
independent of $(X_{1}$,$\ldots$,$X_{i})$ for $i\geq1$. Similarly,
$\{X_{i}\}_{i=1}^{n}\subset L^{1}(\Omega;\mathbb{R}^{d})$ with $n>1$ is called
independent if $X_{i+1}$ is independent of $(X_{1}$,$\ldots$,$X_{i})$ for
$1\leq i\leq n-1$.
\end{definition}

The proof of the following proposition is the same as that of Proposition 2.1
in \cite{HLL}.

\begin{proposition}
\label{new-pro-2}Let $\{X_{i}\}_{i=1}^{n}$ be an independent random vectors in
$L^{2}(\Omega;\mathbb{R}^{d})$ under $\mathbb{\hat{E}}$. Then, for each
$P\in \mathcal{P}$ and $\varphi \in C_{Lip}(\mathbb{R}^{d})$, we have%
\[
E_{P}[\varphi(X_{i})|\mathcal{F}_{i-1}]\leq \mathbb{\hat{E}}[\varphi
(X_{i})],\text{ }P\text{-a.s., for }i\leq n,
\]
where $\mathcal{P}$ is given in Theorem \ref{new-re-1}, $C_{Lip}%
(\mathbb{R}^{d})$ denotes the space of Lipschitz functions on $\mathbb{R}^{d}%
$, $\mathcal{F}_{i}=\sigma(X_{1}$,$\ldots$,$X_{i})$ for $i\geq1$ and
$\mathcal{F}_{0}=\{ \emptyset,\Omega \}$.
\end{proposition}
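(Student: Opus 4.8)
The plan is to fix $P\in\mathcal{P}$ and $\varphi\in C_{Lip}(\mathbb{R}^{d})$, set $c:=\mathbb{\hat{E}}[\varphi(X_{i})]$, and reduce the conditional bound to a single integral inequality. Since $X_{i}\in L^{2}(\Omega;\mathbb{R}^{d})$ and $\varphi$ is Lipschitz, $\varphi(X_{i})\in L^{1}(\Omega)$, so $c$ is a finite constant and the extension of Theorem \ref{new-re-1} to $L^{1}(\Omega)$ gives $E_{P}[Z]\leq\mathbb{\hat{E}}[Z]$ for every $Z\in L^{1}(\Omega)$. By the defining property of the conditional expectation, it suffices to prove that
\[
E_{P}\!\left[\eta\,(\varphi(X_{i})-c)\right]\leq 0
\]
for every nonnegative bounded $\mathcal{F}_{i-1}$-measurable $\eta$; this is equivalent to $E_{P}[\varphi(X_{i})\,|\,\mathcal{F}_{i-1}]\leq c$, $P$-a.s., which is the assertion.

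First I would establish this for the test functions $\eta=h(X_{1},\ldots,X_{i-1})$ with $h\in C_{b.Lip}(\mathbb{R}^{d(i-1)})$ and $h\geq 0$. Using $E_{P}\leq\mathbb{\hat{E}}$, then the independence of $X_{i}$ from $(X_{1},\ldots,X_{i-1})$, and finally positive homogeneity together with $\mathbb{\hat{E}}[\varphi(X_{i})-c]=\mathbb{\hat{E}}[\varphi(X_{i})]-c=0$, I obtain
\[
E_{P}\!\left[\eta\,(\varphi(X_{i})-c)\right]\leq\mathbb{\hat{E}}\!\left[\eta\,(\varphi(X_{i})-c)\right]=\mathbb{\hat{E}}\!\left[\eta\,\mathbb{\hat{E}}[\varphi(X_{i})-c]\right]=\mathbb{\hat{E}}[0]=0 .
\]
The middle equality is the independence identity applied to $\psi(x,y)=h(x)(\varphi(y)-c)$: for each fixed $x$ the factor $h(x)\geq 0$ is pulled out of the inner sublinear expectation by positive homogeneity. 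The crucial point is subtracting the constant $c$ beforehand so that the inner term vanishes; without this, the bound $E_{P}\leq\mathbb{\hat{E}}$ would point in the wrong direction and yield nothing.

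Two approximation steps then finish the argument, and these are where the technical care lies. Since $\varphi$ need only have linear growth, I would first replace it by its truncations $\varphi_{N}=(\varphi\wedge N)\vee(-N)\in C_{b.Lip}(\mathbb{R}^{d})$, for which the independence identity applies verbatim, and then let $N\to\infty$, using $X_{i}\in L^{2}(\Omega;\mathbb{R}^{d})$ with dominated convergence for $E_{P}$ and $\|\varphi_{N}(X_{i})-\varphi(X_{i})\|_{1}\to 0$ for $\mathbb{\hat{E}}$, to recover the inequality for $\varphi$ itself. Next I would pass from nonnegative $h\in C_{b.Lip}$ to an arbitrary nonnegative bounded $\mathcal{F}_{i-1}$-measurable $\eta$: because $\mathcal{F}_{i-1}=\sigma(X_{1},\ldots,X_{i-1})$, every such $\eta$ is an $L^{1}(P)$-limit of uniformly bounded nonnegative functions $h_{k}(X_{1},\ldots,X_{i-1})$ with $h_{k}\in C_{b.Lip}$, via a functional monotone class argument, and passing to the limit preserves $E_{P}[\eta(\varphi(X_{i})-c)]\leq 0$. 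The main obstacle is precisely this extension: one must check that the family of admissible $\eta$ is closed under the monotone and dominated limits employed and is rich enough to test all of $\mathcal{F}_{i-1}$, after which the characterization of conditional expectation delivers the stated $P$-a.s. bound.
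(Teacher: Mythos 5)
Your proposal is correct, and it follows essentially the same route as the proof the paper relies on (the paper does not reprove this statement but cites Proposition 2.1 of \cite{HLL}): test against nonnegative cylinder functions $h(X_{1},\ldots,X_{i-1})$, center by $c=\mathbb{\hat{E}}[\varphi(X_{i})]$ so that independence and positive homogeneity kill the inner expectation, dominate $E_{P}$ by $\mathbb{\hat{E}}$, then truncate the unbounded Lipschitz $\varphi$ and extend to all of $\mathcal{F}_{i-1}$ by approximation. The only cosmetic caveat is that your displayed chain should be written for the truncations $\varphi_{N}$ with $c_{N}=\mathbb{\hat{E}}[\varphi_{N}(X_{i})]$ before passing to the limit, exactly as you note afterwards.
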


\section{Main results}

Let $\{X_{i}\}_{i=1}^{n}$ be an independent random vectors in $L^{2}%
(\Omega;\mathbb{R}^{d})$ under $\mathbb{\hat{E}}$. For each $i\leq n$, define
$g_{i}:\mathbb{R}^{d}\rightarrow \mathbb{R}$ as follows:%
\begin{equation}
g_{i}(p):=\mathbb{\hat{E}}[\langle p,X_{i}\rangle]\text{ for }p\in
\mathbb{R}^{d}. \label{new-e2-1}%
\end{equation}
It is easy to check that%
\[
g_{i}(p_{1}+p_{2})\leq g_{i}(p_{1})+g_{i}(p_{2})\text{ and }g_{i}(\lambda
p)=\lambda g_{i}(p)\text{ for }\lambda \geq0.
\]
By Theorem 1.2.1 in \cite{P2019} and the separation theorem for convex sets,
we can prove that there exists a unique convex and compact set $\Theta
_{i}\subset \mathbb{R}^{d}$ satisfying%
\begin{equation}
g_{i}(p)=\sup_{\theta \in \Theta_{i}}\langle p,\theta \rangle \text{ for }%
p\in \mathbb{R}^{d}. \label{new-e2-2}%
\end{equation}
Moreover, we have%
\begin{equation}
\Theta_{i}=\{ \theta \in \mathbb{R}^{d}:\langle \theta,p\rangle \leq
g_{i}(p)\text{ for }p\in \mathbb{R}^{d}\}. \label{new-e2-3}%
\end{equation}

The following theorem is our first main result.

\begin{theorem}
\label{new-m1}Let $\{X_{i}\}_{i=1}^{n}$ be an independent random vectors in
$L^{2}(\Omega;\mathbb{R}^{d})$ under $\mathbb{\hat{E}}$. Then we have

\begin{description}
\item[(1)] $\Theta_{i}=\{E_{P}[X_{i}]:P\in \mathcal{P}\}$ for $i\leq n$, where
$\mathcal{P}$ is given in Theorem \ref{new-re-1} and $\Theta_{i}$ is defined
in (\ref{new-e2-3}).

\item[(2)] for each $P\in \mathcal{P}$ and $i\leq n$, $E_{P}[X_{i}%
|\mathcal{F}_{i-1}]\in \Theta_{i}$, $P$-a.s., where $\mathcal{F}_{i}%
=\sigma(X_{1}$,$\ldots$,$X_{i})$ for $i\geq1$ and $\mathcal{F}_{0}=\{
\emptyset,\Omega \}$.
\end{description}
\end{theorem}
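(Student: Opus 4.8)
The plan is to treat the two parts separately, with the representation theorem (Theorem \ref{new-re-1}) and the support-function characterization (\ref{new-e2-2})--(\ref{new-e2-3}) as the central tools. Write $K_{i}:=\{E_{P}[X_{i}]:P\in\mathcal{P}\}$. Applying the representation theorem to the linear functionals $\langle p,X_{i}\rangle$ gives $g_{i}(p)=\sup_{P\in\mathcal{P}}\langle p,E_{P}[X_{i}]\rangle$, so $g_{i}$ is precisely the support function of $K_{i}$, and the goal of Part (1) is to identify this support set with $\Theta_{i}$.

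For Part (1), the inclusion $K_{i}\subseteq\Theta_{i}$ is the easy direction: for $\theta=E_{P}[X_{i}]$ and any $p$, $\langle\theta,p\rangle=E_{P}[\langle p,X_{i}\rangle]\leq g_{i}(p)$, so $\theta\in\Theta_{i}$ by (\ref{new-e2-3}). For the reverse inclusion I would argue by strict separation: if some $\theta\in\Theta_{i}$ were not in $K_{i}$, and if $K_{i}$ is convex and compact, the separation theorem for convex sets yields $p\in\mathbb{R}^{d}$ with $\langle p,\theta\rangle>\sup_{k\in K_{i}}\langle p,k\rangle=g_{i}(p)$, contradicting $\theta\in\Theta_{i}$. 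Convexity of $K_{i}$ is inherited from convexity of $\mathcal{P}$ and linearity of $P\mapsto E_{P}[X_{i}]$. The compactness is the crux and is where the $L^{2}$ hypothesis enters: since $\sup_{P\in\mathcal{P}}E_{P}[|X_{i}|]\leq\mathbb{\hat{E}}[|X_{i}|]<\infty$, approximating each component of $X_{i}$ in $L^{2}$-norm (hence in $L^{1}$-norm) by bounded Lipschitz functions produces $X_{i}^{(m)}$ with components in $C_{b.Lip}(\Omega)$ such that $|E_{P}[X_{i}]-E_{P}[X_{i}^{(m)}]|\leq\mathbb{\hat{E}}[|X_{i}-X_{i}^{(m)}|]\to0$ uniformly over $P\in\mathcal{P}$. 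Since each $P\mapsto E_{P}[X_{i}^{(m)}]$ is continuous for the topology of weak convergence on the weakly compact set $\mathcal{P}$, the uniform limit $P\mapsto E_{P}[X_{i}]$ is continuous as well, so $K_{i}$ is the continuous image of a compact set and hence compact.

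For Part (2), the idea is to feed the linear test functions $\varphi_{p}(x):=\langle p,x\rangle$ into Proposition \ref{new-pro-2}. For each fixed $p\in\mathbb{R}^{d}$ this gives $\langle p,E_{P}[X_{i}|\mathcal{F}_{i-1}]\rangle=E_{P}[\langle p,X_{i}\rangle\mid\mathcal{F}_{i-1}]\leq\mathbb{\hat{E}}[\langle p,X_{i}\rangle]=g_{i}(p)$, $P$-a.s. The obstacle is that the exceptional null set depends on $p$, whereas membership in $\Theta_{i}$ via (\ref{new-e2-3}) requires the inequality for all $p$ simultaneously. I would resolve this with the standard countable-dense-set argument: fix a countable dense set $D\subseteq\mathbb{R}^{d}$, take the union over $p\in D$ of the (still null) exceptional sets, and on the complement use that both $p\mapsto\langle p,E_{P}[X_{i}|\mathcal{F}_{i-1}](\omega)\rangle$ and $p\mapsto g_{i}(p)$ are continuous—the latter being the finite support function of the compact set $\Theta_{i}$—to pass from $D$ to all of $\mathbb{R}^{d}$. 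This yields $\langle p,E_{P}[X_{i}|\mathcal{F}_{i-1}](\omega)\rangle\leq g_{i}(p)$ for every $p$, $P$-a.s., which is exactly $E_{P}[X_{i}|\mathcal{F}_{i-1}]\in\Theta_{i}$ by (\ref{new-e2-3}).

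The step I expect to be the main obstacle is the compactness of $K_{i}$ in Part (1): this is precisely where one must upgrade from bounded continuous integrands, for which weak continuity of $P\mapsto E_{P}[\,\cdot\,]$ is immediate, to the $L^{2}$ random vector $X_{i}$, and the uniform-approximation estimate above is what keeps the separation argument applicable.
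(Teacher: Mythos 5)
Your proof is correct and follows essentially the same route as the paper's: you identify $g_{i}$ as the support function of $\{E_{P}[X_{i}]:P\in\mathcal{P}\}$ via the representation theorem and conclude Part (1) by convexity, compactness and separation (the paper phrases this as uniqueness of the representing convex compact set, which rests on the same separation argument), and Part (2) is proved exactly as in the paper, by feeding $\langle p,\cdot\rangle$ into Proposition \ref{new-pro-2}, taking a countable dense set of directions, and using continuity in $p$. The only difference is one of detail: you spell out the compactness of $\{E_{P}[X_{i}]:P\in\mathcal{P}\}$ via uniform approximation by bounded Lipschitz vectors and weak continuity on $\mathcal{P}$, a step the paper leaves as ``easy to verify.''
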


\begin{proof}
(1) It follows from Theorem \ref{new-re-1} that%
\[
g_{i}(p)=\mathbb{\hat{E}}[\langle p,X_{i}\rangle]=\sup_{P\in \mathcal{P}}%
E_{P}[\langle p,X_{i}\rangle]=\sup_{P\in \mathcal{P}}\langle p,E_{P}%
[X_{i}]\rangle.
\]
Set $\tilde{\Theta}_{i}=\{E_{P}[X_{i}]:P\in \mathcal{P}\}$ for $i\leq n$. Since
$\mathcal{P}$ is convex and weakly compact, it is easy to verify that
$\tilde{\Theta}_{i}$ is a convex and compact set in $\mathbb{R}^{d}$. Thus we
obtain $\Theta_{i}=\tilde{\Theta}_{i}$ by the uniqueness of the representation
for $g_{i}$.

(2) Set $\mathbb{Q}^{d}=\{p=(p^{1},\ldots,p^{d})^{T}:p^{i}\in \mathbb{Q}$ for
$i\leq d\}$, where $\mathbb{Q}$ is the set of all rational numbers in
$\mathbb{R}$. For each $P\in \mathcal{P}$ and $i\leq n$, it follows from
Proposition \ref{new-pro-2} that for each $p\in \mathbb{Q}^{d}$, we have%
\[
\langle p,E_{P}[X_{i}|\mathcal{F}_{i-1}]\rangle=E_{P}[\langle p,X_{i}%
\rangle|\mathcal{F}_{i-1}]\leq \mathbb{\hat{E}}[\langle p,X_{i}\rangle
]=g_{i}(p),\text{ }P\text{-a.s.}%
\]
Since $\mathbb{Q}^{d}$ is countable and dense in $\mathbb{R}^{d}$, we obtain%
\[
\langle p,E_{P}[X_{i}|\mathcal{F}_{i-1}]\rangle \leq g_{i}(p)\text{ for all
}p\in \mathbb{R}^{d},\text{ }P\text{-a.s.,}%
\]
which implies $E_{P}[X_{i}|\mathcal{F}_{i-1}]\in \Theta_{i}$, $P$-a.s., by
(\ref{new-e2-3}).
\end{proof}

Based on the above theorem, we can get the following two main results.

\begin{theorem}
\label{new-m2}Let $\{X_{i}\}_{i=1}^{n}$ be an independent random vectors in
$L^{2}(\Omega;\mathbb{R}^{d})$ under $\mathbb{\hat{E}}$. Then%
\begin{equation}
\mathbb{\hat{E}}\left[  \rho_{\Theta}^{2}\left(  \frac{1}{n}\sum_{i=1}%
^{n}X_{i}\right)  \right]  \leq \frac{\bar{\sigma}_{n}^{2}}{n},
\label{new-e2-4}%
\end{equation}
where $\Theta=\{ \frac{1}{n}\sum_{i=1}^{n}\theta_{i}:\theta_{i}\in \Theta_{i}$
for $i\leq n\}$, $\Theta_{i}$ is defined in (\ref{new-e2-3}), $\rho_{\Theta
}(x)=\inf \{|x-\theta|:\theta \in \Theta \}$ for $x\in \mathbb{R}^{d}$ and
\begin{equation}
\bar{\sigma}_{n}^{2}=\sup_{i\leq n}\inf_{\theta_{i}\in \Theta_{i}}%
\mathbb{\hat{E}}[|X_{i}-\theta_{i}|^{2}]. \label{new-e2-6}%
\end{equation}

\end{theorem}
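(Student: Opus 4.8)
The plan is to invoke the representation theorem (Theorem \ref{new-re-1}) to reduce the sublinear inequality to a family of classical second-moment estimates, one for each $P\in\mathcal{P}$, and then to exploit a martingale-difference structure under each such $P$ furnished by part (2) of Theorem \ref{new-m1}. Since the distance function $\rho_{\Theta}$ to the convex compact set $\Theta$ is $1$-Lipschitz, we have $\rho_{\Theta}(\frac{1}{n}\sum_{i=1}^{n}X_{i})\in L^{2}(\Omega)$ (a Lipschitz image of an $L^{2}$ vector), whence $\rho_{\Theta}^{2}(\frac{1}{n}\sum_{i=1}^{n}X_{i})\in L^{1}(\Omega)$ and
\[
\mathbb{\hat{E}}\left[\rho_{\Theta}^{2}\left(\tfrac{1}{n}\sum_{i=1}^{n}X_{i}\right)\right]=\sup_{P\in\mathcal{P}}E_{P}\left[\rho_{\Theta}^{2}\left(\tfrac{1}{n}\sum_{i=1}^{n}X_{i}\right)\right].
\]
It therefore suffices to bound $E_{P}[\rho_{\Theta}^{2}(\frac{1}{n}\sum_{i=1}^{n}X_{i})]$ by $\bar{\sigma}_{n}^{2}/n$ uniformly over $P\in\mathcal{P}$.

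Fix $P\in\mathcal{P}$ and set $\xi_{i}:=E_{P}[X_{i}|\mathcal{F}_{i-1}]$ for $i\leq n$. By part (2) of Theorem \ref{new-m1} we have $\xi_{i}\in\Theta_{i}$, $P$-a.s., so $\frac{1}{n}\sum_{i=1}^{n}\xi_{i}\in\Theta$, $P$-a.s., by the definition of $\Theta$. The definition of $\rho_{\Theta}$ then gives, $P$-a.s.,
\[
\rho_{\Theta}\left(\tfrac{1}{n}\sum_{i=1}^{n}X_{i}\right)\leq\left\vert\tfrac{1}{n}\sum_{i=1}^{n}(X_{i}-\xi_{i})\right\vert.
\]
Writing $Y_{i}:=X_{i}-\xi_{i}$, the family $\{Y_{i}\}$ is a sequence of martingale differences under $P$ relative to $\{\mathcal{F}_{i}\}$, since $E_{P}[Y_{i}|\mathcal{F}_{i-1}]=0$. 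Expanding the square and using the tower property, each cross term $E_{P}[\langle Y_{i},Y_{j}\rangle]$ with $i<j$ vanishes because $Y_{i}$ is $\mathcal{F}_{j-1}$-measurable and $E_{P}[Y_{j}|\mathcal{F}_{j-1}]=0$; hence $E_{P}[|\frac{1}{n}\sum_{i=1}^{n}Y_{i}|^{2}]=\frac{1}{n^{2}}\sum_{i=1}^{n}E_{P}[|Y_{i}|^{2}]$.

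It remains to estimate each $E_{P}[|Y_{i}|^{2}]$. Since $\xi_{i}=E_{P}[X_{i}|\mathcal{F}_{i-1}]$ is the $L^{2}(P)$-projection onto the $\mathcal{F}_{i-1}$-measurable functions, and every constant $\theta_{i}\in\Theta_{i}\subset\mathbb{R}^{d}$ is $\mathcal{F}_{i-1}$-measurable, the Pythagorean identity for conditional expectation yields, for every $\theta_{i}\in\Theta_{i}$,
\[
E_{P}[|Y_{i}|^{2}]\leq E_{P}[|X_{i}-\theta_{i}|^{2}]\leq\mathbb{\hat{E}}[|X_{i}-\theta_{i}|^{2}].
\]
Taking the infimum over $\theta_{i}\in\Theta_{i}$ gives $E_{P}[|Y_{i}|^{2}]\leq\inf_{\theta_{i}\in\Theta_{i}}\mathbb{\hat{E}}[|X_{i}-\theta_{i}|^{2}]\leq\bar{\sigma}_{n}^{2}$ by the definition (\ref{new-e2-6}). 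Combining the three displays, $E_{P}[\rho_{\Theta}^{2}(\frac{1}{n}\sum_{i=1}^{n}X_{i})]\leq\frac{1}{n^{2}}\sum_{i=1}^{n}\bar{\sigma}_{n}^{2}=\bar{\sigma}_{n}^{2}/n$, and taking the supremum over $P\in\mathcal{P}$ completes the argument. The two points requiring care are the $L^{1}$-integrability needed to apply the representation theorem and the transfer of membership in $\Theta_{i}$ from Theorem \ref{new-m1}(2) to the conditional expectations $\xi_{i}$; once these are secured, the martingale-difference cancellation is precisely the mechanism that produces the decisive factor $1/n$.
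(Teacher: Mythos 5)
Your proposal is correct and follows essentially the same route as the paper's proof: reduce via the representation theorem to a bound uniform in $P\in\mathcal{P}$, use Theorem \ref{new-m1}(2) to place $\frac{1}{n}\sum_{i=1}^{n}E_{P}[X_{i}|\mathcal{F}_{i-1}]$ in $\Theta$, exploit the martingale-difference orthogonality to get the factor $1/n$, and bound each $E_{P}[|X_{i}-E_{P}[X_{i}|\mathcal{F}_{i-1}]|^{2}]$ by $\bar{\sigma}_{n}^{2}$. The only cosmetic difference is in the last step, where the paper passes through $E_{P}[X_{i}]$ as the minimizing constant (invoking Theorem \ref{new-m1}(1)), while you compare directly against arbitrary $\theta_{i}\in\Theta_{i}$ via the $L^{2}(P)$-projection property of conditional expectation.
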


\begin{proof}
It follows from Theorem \ref{new-re-1} that%
\begin{equation}
\mathbb{\hat{E}}\left[  \rho_{\Theta}^{2}\left(  \frac{1}{n}\sum_{i=1}%
^{n}X_{i}\right)  \right]  =\sup_{P\in \mathcal{P}}E_{P}\left[  \rho_{\Theta
}^{2}\left(  \frac{1}{n}\sum_{i=1}^{n}X_{i}\right)  \right]  .
\label{new-e2-5}%
\end{equation}
For each $P\in \mathcal{P}$, we have $\frac{1}{n}\sum_{i=1}^{n}E_{P}%
[X_{i}|\mathcal{F}_{i-1}]\in \Theta$, $P$-a.s., by (2) in Theorem \ref{new-m1}.
Thus we obtain%
\begin{align*}
E_{P}\left[  \rho_{\Theta}^{2}\left(  \frac{1}{n}\sum_{i=1}^{n}X_{i}\right)
\right]   &  \leq E_{P}\left[  \left \vert \frac{1}{n}\sum_{i=1}^{n}X_{i}%
-\frac{1}{n}\sum_{i=1}^{n}E_{P}[X_{i}|\mathcal{F}_{i-1}]\right \vert
^{2}\right] \\
&  =\frac{1}{n^{2}}\sum_{i=1}^{n}E_{P}[|X_{i}-E_{P}[X_{i}|\mathcal{F}%
_{i-1}]|^{2}]\\
&  \leq \frac{1}{n^{2}}\sum_{i=1}^{n}E_{P}[|X_{i}-E_{P}[X_{i}]|^{2}]\\
&  =\frac{1}{n^{2}}\sum_{i=1}^{n}\inf_{\theta_{i}\in \Theta_{i}}E_{P}%
[|X_{i}-\theta_{i}|^{2}]\\
&  \leq \frac{1}{n^{2}}\sum_{i=1}^{n}\inf_{\theta_{i}\in \Theta_{i}}%
\mathbb{\hat{E}}[|X_{i}-\theta_{i}|^{2}]\\
&  \leq \frac{\bar{\sigma}_{n}^{2}}{n}%
\end{align*}
for each $P\in \mathcal{P}$. Therefore, we deduce (\ref{new-e2-4}) by
(\ref{new-e2-5}).
\end{proof}

\begin{remark}
Even if $\Omega$ is not a complete separable metric space and $\mathbb{\hat
{E}}$ does not satisfy the regular property (v), the inequality
(\ref{new-e2-4}) still holds. Because we can consider the distribution
$\mathbb{\hat{F}}_{X}$ of $X=(X_{1}$,$\ldots$,$X_{n})$, which is a regular
sublinear expectation on $(\mathbb{R}^{d\times n},C_{b.Lip}(\mathbb{R}%
^{d\times n}))$. If $X_{i}\overset{d}{=}X_{1}$ for $i\leq n$, then
$\Theta=\Theta_{1}$ and $\bar{\sigma}_{n}^{2}=\bar{\sigma}_{1}^{2}%
=\inf_{\theta \in \Theta_{1}}\mathbb{\hat{E}}[|X_{1}-\theta_{1}|^{2}]$ in
Theorem \ref{new-m2}. In particular, we do not need to assume that $\Theta$ is
a convex polytope as Theorem 2.2 in \cite{FP}.
\end{remark}

\begin{theorem}
\label{new-m3}Let $\{X_{i}\}_{i=1}^{n}$ be an independent random vectors in
$L^{2}(\Omega;\mathbb{R}^{d})$ under $\mathbb{\hat{E}}$. Set $L^{2}%
(\Omega;\Theta)=\{ \xi \in L^{2}(\Omega;\mathbb{R}^{d}):\xi(\omega)\in \Theta$
for $\omega \in \Omega \}$, where $\Theta$ is defined in Theorem \ref{new-m2}.
Then%
\begin{equation}
\inf_{\xi \in L^{2}(\Omega;\Theta)}\mathbb{\hat{E}}\left[  \left \vert \frac
{1}{n}\sum_{i=1}^{n}X_{i}-\xi \right \vert ^{2}\right]  \leq \frac{\bar{\sigma
}_{n}^{2}}{n}, \label{new-e2-10}%
\end{equation}
where $\bar{\sigma}_{n}^{2}$ is defined in (\ref{new-e2-6}).
\end{theorem}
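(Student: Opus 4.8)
The plan is to reduce the stated quantity to a minimax problem and interchange the infimum over $\xi$ with the supremum over $\mathcal{P}$ produced by the representation theorem. Writing $S_n=\frac1n\sum_{i=1}^n X_i$, the inequality (\ref{new-e2-10}) looks stronger than (\ref{new-e2-4}) only in form: Theorem \ref{new-m2} controls the pointwise distance $\rho_\Theta(S_n)$, whereas here one must exhibit (or approximate) a single admissible $\Theta$-valued random vector $\xi$ whose squared $L^2$-distance to $S_n$ is small. The natural candidate is the metric projection $\xi^\ast=\Pi_\Theta(S_n)$, for which $|S_n-\xi^\ast|^2=\rho_\Theta^2(S_n)$ holds pointwise; if one could verify $\xi^\ast\in L^2(\Omega;\Theta)$, then (\ref{new-e2-10}) would follow immediately from $\mathbb{\hat{E}}[|S_n-\xi^\ast|^2]=\mathbb{\hat{E}}[\rho_\Theta^2(S_n)]\le\bar{\sigma}_n^2/n$ via Theorem \ref{new-m2}. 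The delicate point, addressed below, is membership of the projection in the \emph{completion} space $L^2(\Omega;\Theta)$.

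To keep the argument robust against this subtlety I would instead pass to a minimax formulation. Since $S_n,\xi\in L^2(\Omega;\mathbb{R}^d)$ give $|S_n-\xi|^2\in L^1(\Omega)$, the representation theorem (Theorem \ref{new-re-1}, extended to $L^1(\Omega)$) yields $\mathbb{\hat{E}}[|S_n-\xi|^2]=\sup_{P\in\mathcal{P}}E_P[|S_n-\xi|^2]$, so the left-hand side of (\ref{new-e2-10}) equals $\inf_{\xi\in L^2(\Omega;\Theta)}\sup_{P\in\mathcal{P}}E_P[|S_n-\xi|^2]$. On the convex set $L^2(\Omega;\Theta)$ and the weakly compact convex set $\mathcal{P}$, the payoff $F(\xi,P)=E_P[|S_n-\xi|^2]$ is convex and $\|\cdot\|_2$-continuous in $\xi$ (because $E_P[\,\cdot\,]\le\mathbb{\hat{E}}[\,\cdot\,]$ forces $\|\cdot\|_2$-convergence to pass to $L^2(P)$-convergence) and affine in $P$. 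Its upper semicontinuity in $P$ I would get by approximating $|S_n-\xi|^2$ in $\|\cdot\|_1$ by $\varphi_m\in C_{b.Lip}(\Omega)$, for which $P\mapsto E_P[\varphi_m]$ is weakly continuous, the approximation being uniform in $P\in\mathcal{P}$. Sion's minimax theorem then permits the interchange $\inf_\xi\sup_P F=\sup_P\inf_\xi F$.

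For each fixed $P$ the inner infimum is evaluated by selection: $F(\xi,P)\ge E_P[\rho_\Theta^2(S_n)]$ trivially, while Lusin's theorem produces, for every $\varepsilon>0$, a continuous $\Theta$-valued function approximating the measurable projection $\Pi_\Theta(S_n)$ to within $\varepsilon$ in $L^2(P)$ and lying in $L^2(\Omega;\Theta)$, so that $\inf_\xi F(\xi,P)=E_P[\rho_\Theta^2(S_n)]$. Taking the supremum over $P$ and using the representation theorem once more gives $\sup_P\inf_\xi F=\mathbb{\hat{E}}[\rho_\Theta^2(S_n)]\le\bar{\sigma}_n^2/n$ by (\ref{new-e2-4}), and combined with the minimax identity this is precisely (\ref{new-e2-10}). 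I expect the main obstacle to be exactly this fixed-$P$ selection step: since elements of the completion $L^2(\Omega;\mathbb{R}^d)$ carry no canonical pointwise values, realizing an honest $\Theta$-valued representative attaining the pointwise projection is what forces the use of Lusin's theorem. A secondary difficulty is the upper-semicontinuity hypothesis of Sion's theorem, which is not automatic because $|S_n-\xi|^2$ is merely integrable rather than bounded and continuous, and which the uniform $C_{b.Lip}$-approximation above is designed to resolve.
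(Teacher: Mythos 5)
Your proposal is correct and follows essentially the same route as the paper's proof: rewrite the left-hand side of (\ref{new-e2-10}) as $\inf_{\xi}\sup_{P\in\mathcal{P}}E_{P}[\,\cdot\,]$ via the representation theorem, interchange $\inf$ and $\sup$ by Sion's minimax theorem (weak compactness and convexity of $\mathcal{P}$, convexity and $\Vert\cdot\Vert_{2}$-continuity in $\xi$, affineness and upper semicontinuity in $P$), and then settle the inner infimum for each fixed $P$ by a Lusin approximation with $\Theta$-valued continuous functions, which lie in $L^{2}(\Omega;\Theta)$. The only variation is the target of that approximation: you approximate the metric projection of $\frac{1}{n}\sum_{i=1}^{n}X_{i}$ onto $\Theta$ and then invoke (\ref{new-e2-4}), whereas the paper approximates the $P$-a.s.\ $\Theta$-valued random vector $\frac{1}{n}\sum_{i=1}^{n}E_{P}[X_{i}|\mathcal{F}_{i-1}]$ (Theorem \ref{new-m1}, part (2)) and reuses the variance computation from the proof of Theorem \ref{new-m2} --- the two bounds coincide, so this is a cosmetic rather than structural difference.
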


\begin{proof}
It follows from Theorem \ref{new-re-1} that%
\begin{equation}
\inf_{\xi \in L^{2}(\Omega;\Theta)}\mathbb{\hat{E}}\left[  \left \vert \frac
{1}{n}\sum_{i=1}^{n}X_{i}-\xi \right \vert ^{2}\right]  =\inf_{\xi \in
L^{2}(\Omega;\Theta)}\sup_{P\in \mathcal{P}}E_{P}\left[  \left \vert \frac{1}%
{n}\sum_{i=1}^{n}X_{i}-\xi \right \vert ^{2}\right]  . \label{new-e2-7}%
\end{equation}
Since $\Theta$ is convex and compact, it is easy to verify that $L^{2}%
(\Omega;\Theta)$ is convex and for each $\lambda \in \lbrack0,1]$, $\xi_{1}$,
$\xi_{2}\in L^{2}(\Omega;\Theta)$,%
\[
E_{P}\left[  \left \vert \frac{1}{n}\sum_{i=1}^{n}X_{i}-(\lambda \xi
_{1}+(1-\lambda)\xi_{2})\right \vert ^{2}\right]  \leq \lambda E_{P}\left[
\left \vert \frac{1}{n}\sum_{i=1}^{n}X_{i}-\xi_{1}\right \vert ^{2}\right]
+(1-\lambda)E_{P}\left[  \left \vert \frac{1}{n}\sum_{i=1}^{n}X_{i}-\xi
_{2}\right \vert ^{2}\right]  .
\]
Note that $\mathcal{P}$ is weakly compact. Then, by Sion's minimax theorem, we
obtain%
\begin{equation}
\inf_{\xi \in L^{2}(\Omega;\Theta)}\sup_{P\in \mathcal{P}}E_{P}\left[
\left \vert \frac{1}{n}\sum_{i=1}^{n}X_{i}-\xi \right \vert ^{2}\right]
=\sup_{P\in \mathcal{P}}\inf_{\xi \in L^{2}(\Omega;\Theta)}E_{P}\left[
\left \vert \frac{1}{n}\sum_{i=1}^{n}X_{i}-\xi \right \vert ^{2}\right]  .
\label{new-e2-8}%
\end{equation}
Since $C_{b}(\Omega)\subset L^{2}(\Omega)$ and $\frac{1}{n}\sum_{i=1}^{n}%
E_{P}[X_{i}|\mathcal{F}_{i-1}]\in \Theta$, $P$-a.s., by Lusin's theorem, we can
find a sequence $\{ \xi_{k}\}_{k=1}^{\infty}$ in $L^{2}(\Omega;\Theta)$ such
that%
\begin{equation}
\lim_{k\rightarrow \infty}E_{P}\left[  \left \vert \frac{1}{n}\sum_{i=1}%
^{n}X_{i}-\xi_{k}\right \vert ^{2}\right]  =E_{P}\left[  \left \vert \frac{1}%
{n}\sum_{i=1}^{n}X_{i}-\frac{1}{n}\sum_{i=1}^{n}E_{P}[X_{i}|\mathcal{F}%
_{i-1}]\right \vert ^{2}\right]  . \label{new-e2-9}%
\end{equation}
Thus we get (\ref{new-e2-10}) by (\ref{new-e2-7})-(\ref{new-e2-9}).
\end{proof}

\bigskip

\end{document}